\documentclass{amsart}
\usepackage{graphicx}
\usepackage{xcolor}
\usepackage{ulem}
\usepackage[colorlinks=true,citecolor=black,linkcolor=black,urlcolor=blue]{hyperref}

\newtheorem{theorem}{Theorem}
\newtheorem{lemma}[theorem]{Lemma}
\newtheorem{proposition}[theorem]{Proposition}
\newtheorem{corollary}[theorem]{Corollary}
\newtheorem{conjecture}[theorem]{Conjecture}

\newtheorem{question}[theorem]{Question}

\theoremstyle{remark}

\newtheorem{example}{Example}
\newcommand{\bExample}{\begin{example}}
\newcommand{\eExample}{\end{example}}

\newcommand{\bEnum}{\begin{enumerate}}
\newcommand{\eEnum}{\end{enumerate}}
\newcommand{\bItemize}{\begin{itemize}}
\newcommand{\eItemize}{\end{itemize}}

\newcommand{\R}{\mathbb R}

\newcommand{\st}{: \;}

\newcommand{\ty}{\nabla\mathrm{Y}}
\newcommand{\yt}{\mathrm{Y}\nabla}
\newcommand{\PP}{\mathcal P}

\newcounter{arb}

\title{Three thousand obstructions to knotless embedding}
\author{Thomas W. Mattman}
\address{Department of Mathematics \& Statistics; CSU, Chico;
Chico, California 95929-0525}
\email{TMattman@CSUChico.edu}
\thanks{The first author was supported in part by the American Mathematical
Society and the Sloan Foundation (Grant G-2025-25245) while the author
participated in the LATTICE 2026 program hosted by the Simons Laufer Mathematical
Sciences Institute in Berkeley, California.}

\author{Andrei Pavelescu}
\address{Department of Mathematics \& Statistics; University of South Alabama;
Mobile, Alabama 36688-0002}
\email{andreipavelescu@southalabama.edu}
\thanks{The second author was supported in part by the AMS-Simons Research Enhancement Grant for PUI Faculty and 
 by the American Mathematical
Society and the Sloan Foundation (Grant G-2025-25245) while the author
participated in the LATTICE 2026 program hosted by the Simons Laufer Mathematical
Sciences Institute in Berkeley, California.}

\begin{document}{

\begin{abstract}  
We present a list of 3028 obstructions to knotless embedding. We survey recent work in this area including: 1) A bibliography of
graphs proven to be intrinsically knotted without relying on computers; 2) An updated listing of obstructions in $\ty$ families including two new large families;
3) Connections with the Colin de Verdi\`ere's invariant including a new obstruction with $\mu = 6$;  and 4) Connectivity of obstructions and their
structure near vertices of degree three or four. We address questions raised in earlier work, (re)state several conjectures, and propose new questions.
\end{abstract}

\maketitle

A consequence of the celebrated Graph Minor Theorem of Robertson and Seymour~\cite{RS} is that the list of obstructions to knotless embedding 
is finite. However, bounding the size of this set remains a daunting challenge. In~\cite{FMMNN}, the authors discuss the 264 obstructions known in 2017. 
Here, we present a list of more than three thousand obstructions. Appendix A gives graph6 notation for these 3028 graphs.

The inspiration to prepare this updated list came from
several fruitful discussions during the Spatial Graphs Session of SumTopo 2025 in Mobile, Alabama.

Our examples range in order from 7 through 19 and in size from 21 through 37. 
Our list is complete through order ten, see~\cite{BBFFHL, CG, CMOPRW, FMMNN, KM, MMR}.
By~\cite{BM, FMMNN, KM, LKLO}, it is also complete through size 23.
We say that a graph is {\em intrinsically knotted (IK)} if every spatial embedding contains a non-trivially knotted cycle.
To verify that the graphs in our list are IK, we mostly rely on the algorithm of Miller and Naimi~\cite{MN}.
A Mathematica implementation of the algorithm is available at \cite{N}. As discussed in~\cite{MN}, a graph with no $D_4$-less embedding is IK, 
and we have used the program to verify that each graph in our list has no $D_4$-less embedding.

To show that a graph is minor minimal for the IK property, or MMIK, we must argue that no proper minor is IK. This is immediate for 
the minors that are $2$-apex. For $n \geq 0$, a graph is {\em $n$-apex} if it can be made planar through the deletion of at most $n$ vertices. 
By \cite{BBFFHL,OT}, $2$-apex graphs are not IK (nIK).
For the remaining proper minors, we largely resort to using the algorithm
of Miller and Naimi to check that they are nIK. This usage is built on two assumptions. First, we are relying on the program to find a $D_4$-less
embedding. Aside from potential errors in the code, the program could produce a false positive
as the current implementation only
checks chordless cycles in the graph, rather than the entire cycle space, see~\cite{MN}. A second assumption 
is a positive response to Foisy's question 
(see \cite{MN}).

\begin{question}[Foisy]
\label{que:ID4}
If $G$ has a $D_4$-less embedding, does $G$ have a knotless embedding? \end{question}

To date, every time our program has uncovered a $D_4$-less embedding and we have tried to convert that into a knotless embedding, 
we have succeeded. For example, our classification of the MMIK graphs of order 10 in \cite{KM} includes knotless embeddings of 
eighteen graphs found in this way.

In summary, while we are confident that all graphs in our list are IK, we acknowledge that, for many of them, the argument that each proper minor is nIK 
rests on assumptions that remain to be verified. In spite of this we will refer to the graphs in our list as MMIK graphs or obstructions to 
knotless embedding.

On the other hand, several graphs have been shown to be IK without relying on computer programs and we give a listing of such graphs in
Section~\ref{sec:hand}.


\begin{table}[ht]
\begin{center}
\begin{tabular}{|l|c|c|c|c|} \hline
Germ & $\mu$ & Cousins & IK  & MMIK  \\ \hline
$K_7$ & 6 &  20 & 14 & 14 \\ \hline
$K_{3,3,1,1}$ & 6 & 58 & 58 & 58 \\ \hline
$E_9+e$ & 6 & 110 & 110 & 33 \\ \hline
$G_S$ & 6 & 5 & 5 & 1 \\ \hline
$G_{13,30}$ & 5 & 1 & 1 & 1 \\ \hline
$G_{9,28}$ & 5 & 1609 & 1609 &  1344 \\ \hline
$G_{13,25}$ & 5 & 1,418,605 &  2 & 2 \\ \hline
$G_{10,26}$ & 5 & 4959 &  612 & 82 \\ \hline
$G^1_{12,24}$ & 5 & 23,659 & 1 & 1 \\ \hline
$G^2_{12,24}$ & 5 & 207,426 & 2 & 2 \\ \hline
$G_{10,30}$ & 5 & 8,000,000+ & 1985+ & 1+ \\ \hline
$G_{11,37}$ & 5 & 750,000+ & 114,928+ & 8+ \\ \hline
\end{tabular}
\vspace*{.1in}
\caption{Family counts.} \label{tbl:results}
\end{center}
\end{table}
Many of our graphs occur in $\ty$ families and we update the results of \cite{GMN} for some of the prominent families, see Table~\ref{tbl:results}.
We define these families and discuss the  table entries in Section~\ref{sec:TYfam}. Note that $\mu$ refers to the Colin de Verdi\`ere invariant, 
see~\cite{CdV}.

 It is known that $\mu \geq 5$ for a MMIK graph since a graph with no linkless embedding has $\mu > 4$~\cite{LS} and a graph
with no knotless embedding also fails to have a linkless embedding~\cite{RST}. The MMIK graphs in our list all have $5 \leq \mu \leq 6$,
which suggests the following question.
\begin{question}
\label{ques:mu6}
If $G$ is MMIK, is $\mu(G) \leq 6$?
\end{question}
This would follow from a positive response to the related question below.
\begin{question}[\cite{BM}]
\label{que:3apex}
Is every MMIK graph $3$-apex?
\end{question}
We have verified that the graphs in our list are all $3$-apex.

Van der Holst has proposed a classification of the obstructions to $\mu \leq 5$.
\begin{conjecture}[\cite{vdH}]
\label{con:vdH}
The graphs in the $K_7$ and $K_{3,3,1,1}$ families are the obstructions to $\mu \leq 5$.
\end{conjecture}
Making use of this conjecture, in Section~\ref{sec:mu}, we present a listing of the MMIK graphs with $\mu \geq 6$ through size 24,
including a new example of size 24. Note that all of these graphs have $\mu(G) = 6$.

If a graph is MMIK, then the minimal degree is at least three. 
In Section~\ref{sec:claws}
we discuss the structure of a MMIK graph near a vertex of degree three or four and show that the
connectivity $\kappa$ is between three and nine.
The largest connectivity we have observed is $\kappa = 6$ and we found exactly
two graphs that realize this maximum. 

Our observations about neighborhoods of degree four vertices yield a positive response to our earlier question.

\begin{question}[\cite{KM}]
\label{que:d3MMIK}
If $G$ has a vertex of degree less than three, can an ancestor or descendant of $G$ be an obstruction
for knotless embedding?
\end{question}

As shown in that paper, descendants and parents of $G$ are not obstructions. However, our list includes examples showing that 
grandparents and more distant ancestors may be.

We conclude the paper with some observations about two large families of graphs one including graphs of size 30, and the other consisting of graphs of size 37, the largest size for MMIK graphs that we have observed.

\section{Proofs by hand}
\label{sec:hand}
For most graphs $G$ in our list, the proof that $G$ is IK relies on a computer implementation of the algorithm of
Miller and Naimi~\cite{MN} to show that there is no $D_4$-less embedding. Here we give a bibliography
of graphs that have been shown to be IK without relying on computers. For most of these, we also
give a count of the number of descendants (including the original graph) as these are also IK by the following.
\begin{lemma} If $G$ is IK and $H$ is a descendant of $G$, then $H$ is also IK.
\label{lem:ty}
\end{lemma}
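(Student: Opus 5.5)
Here a descendant of $G$ means a graph obtained from $G$ by a finite sequence of $\ty$ moves. In a $\ty$ move, a triangle $abc$ is replaced by a new vertex $v$ adjacent to $a,b,c$, and the edges $ab,bc,ca$ are deleted. By induction on the length of the sequence, it is enough to prove the claim for a single move. So let $H$ be obtained from $G$ by one $\ty$ move on the triangle $abc$. I will prove the contrapositive: if $H$ has a knotless embedding, then so does $G$. This is the classical argument of Motwani--Raghunathan--Saran (and Sachs for linking).

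Fix a knotless spatial embedding $f$ of $H$ and look at the embedded claw $T$ made of $v$ and the arcs $va, vb, vc$. Take a small closed regular neighborhood $N$ of $T$. It is a $3$-ball meeting $f(H)$ only in $T$ together with short initial segments of the other edges at $a,b,c$; after shrinking $N$ we may assume those segments lie outside $N$ except at their endpoints. Inside $N$, or on its boundary disk, draw the triangle: join $a,b,c$ pairwise by arcs lying in a disk $D \subset N$ that contains $T$. The arcs $ab, bc, ca$ together with the edges of $H - v$ then give an embedding $g$ of $G$. For convenience I keep $T$ as an auxiliary picture inside $D$, so that each arc, say $ab$, is isotopic within $D$, rel endpoints, to the path $a v b$ of $T$.

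The main step is checking that every cycle $C$ of $G$ is unknotted in $g$. If $C$ avoids the triangle edges, it is a cycle of $H - v$, hence unknotted. If $C$ uses exactly one triangle edge, say $ab$, replace it by the path $avb$. This gives a cycle $C'$ of $H$, unknotted by hypothesis, and $g(C)$ is isotopic to $f(C')$ by sliding $ab$ across $D$. The sliding is legitimate because the interior of $D$ meets no other part of the embedding. If $C$ uses two triangle edges, say $ab$ and $bc$, then the path $abc$ is isotopic within $D$ to the arc $avc$ (the vertex $b$ of $C$ lies in $D$, and the rest of $C$ leaves $D$ only at $a$ and $c$). So again $g(C)$ is isotopic to the image of a cycle of $H$. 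Finally, if $C$ is the triangle itself, it bounds a disk in $D$ and is trivially unknotted.

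The one point needing care is making sure $D$ can be chosen with interior disjoint from the rest of $f(H)$. Since $D$ is just a thin fattening of the three arcs of $T$, it lies inside the regular neighborhood $N$, so this holds. Hence $g$ is a knotless embedding of $G$, contradicting that $G$ is IK. Therefore $H$ is IK, and induction finishes the lemma.
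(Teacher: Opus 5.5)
Your proposal is correct and follows the same route as the paper, whose proof simply cites Sachs's argument that $\yt$ moves preserve nIL and notes that it carries over to nIK; your contrapositive construction is exactly that argument written out: draw the triangle in a disk around the $Y$ of a knotless embedding of the child, then isotope each cycle through that disk. The paper reuses the same construction in the panelling lemma at the end of Section~\ref{sec:claws}. One minor fix: a regular neighborhood of $T$ necessarily contains initial segments of the other edges at $a,b,c$, so instead of shrinking $N$, take $D$ to taper to corners at $a,b,c$ (possible by tameness of the embedding there), which gives $D\cap f(H)=T$ as your isotopies require.
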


\begin{proof} Sachs~\cite{S} showed that $\yt$ moves preserve nIL. Essentially the same argument
applies to the nIK property. Equivalently, $\ty$ moves preserve IK, so any descendant of an IK graph
is also IK.
\end{proof}

\begin{description}
\item[$K_7$] \cite{CG} 14 descendants.
\item[$K_{3,3,1,1}$] \cite{F} 26 descendants.
\item[$G_{13,30}$] \cite{F2} 1 descendant.
\item[Foisy graphs] Foisy~\cite{F3} presented four graphs $G_{14}$, $H_{15}$, $J_{14}$, and $J'_{14}$ and showed 
they are IK.
\item[$G_{9,28}$] \cite{GMN} 1062 descendants.
\item[$G_{10,26}$] \cite{MNPP} 72 descendants.
\item[$G_{11,23}$] There are two $G_{11,23}$ graphs shown to be MMIK in~\cite{KM}, one with one descendant, the other with two.
\end{description}

\section{$\ty$ families}
\label{sec:TYfam}

In this section we review Table~\ref{tbl:results}. Many of these families were discussed in~\cite{GMN} and we refer the reader to that paper for more details. Compared to the earlier paper, 
we have discovered more MMIK graphs in the $G_{9,28}$ family.
The earlier value of 156 was meant as a lower bound. Moreover, those graphs were shown minor minimal without assuming
a positive response to 
Question~\ref{que:ID4}. In contrast the updated value of 1344 includes additional examples that rely on the conjecture.
So far, 
each family has a unique MMIK graph of minimal order, which we refer to as the {\em germ}. The family described as $G_{14,25}$ in \cite{GMN} is now
labelled by its germ $G_{13,25}$. The graph $G_S$ is an $(11,22)$ graph mentioned in~\cite{FMMNN}. The family of $G_S$ is called the $H_9+e$ family in \cite{KM}.
The graph $G_{13,30}$ was introduced by Foisy~\cite{F2}.
The graphs $G_{10,26}$ and $G_{10,30}$ were found in \cite{MNPP}. Here are edge lists for the remaining three germs. The second, $G^2_{12,24}$, is $4$-regular,
while the first, $G^1_{12,24}$, is not.



$$G^1_{12,24}: {[} (0, 1) , (0, 2) , (0, 3) , (0, 4) , (1, 5) , (1, 6) , (1, 7) , (2, 5) , (2, 8) , $$
$$ (2, 9) , (3, 6) ,  (3, 10) , (3, 11) , (4, 8) , (4, 10) , (4, 11) , (5, 10) ,$$
$$ (5, 11) , (6, 8) , (6, 9) , (7, 8) , (7, 10) , (7, 11) , (9, 10) {]} $$

$$G^2_{12,24}: {[} (0, 1) , (0, 2) , (0, 3) , (0, 4) , (1, 5) , (1, 6) , (1, 7) , (2, 5) , (2, 8) , $$
$$(2, 9) , (3, 6) , (3, 10) , (3, 11) , (4, 9) , (4, 10) , (4, 11) , (5, 8) , $$
$$(5, 10) , (6, 8) , (6, 9) , (7, 9) , (7, 10) , (7, 11) , (8, 11) {]}$$

$$G_{11,37}: {[}(0, 2), (0, 4), (0, 5), (0, 6), (0, 8), (0, 9), (0, 10),$$
$$(1, 3), (1, 5), (1, 6), (1, 7), (1, 8), (1, 9), (1, 10), (2, 4),$$
$$(2, 6), (2, 7), (2, 8), (2, 9), (2, 10), (3, 5), (3, 7), (3, 8),$$
$$(4, 6), (4, 9), (5, 7), (5, 8), (5, 9), (5, 10), (6, 8), (6, 9),$$
$$(6, 10), (7, 8), (7, 9), (7, 10), (8, 10), (9, 10){]}$$

In the family of $G^1_{12,24}$ all but twenty of the 
graphs are 2-apex and of the twenty, all but one has a 2-apex descendant. Therefore, all but one of the graphs is IK. The remaining graph, $G^1_{12,24}$ itself, is MMIK and the only IK graph in the family. 

The situation for the $G^2_{12,24}$ family is similar.
All but 21 are 2-apex and of those 21, all but two have
a $2$-apex descendant. The remaining two graphs,
$G^2_{12,24}$ and its child, are MMIK.

In the table, $\mu$ refers to the Colin de Verdi\`ere invariant, which we discuss further in the next section. 
Here we briefly note that all MMIK graphs $G$ in our list have $5 \leq \mu(G) \leq 6$. In this range, $\mu$ is invariant 
under $\ty$ and $\yt$ moves~\cite{HLS}, meaning all graphs in a family have the same $\mu$ value.

In the table, for each germ we list the $\mu$ value,  the number of cousins in the family, and the number of IK and MMIK graphs in the family. 
Note that the MMIK number is a count of the graphs that appear in our list, some of which are included based on our assumption about Question~\ref{que:ID4}.
For the germs $G_{10,30}$ and $G_{11,37}$ we have only a lower bound on the number of graphs and (MM)IK graphs  in the family.
We say more about these families in the final
section of this paper.


\section{The Colin de Verdi\`ere invariant}
\label{sec:mu}

In this section we briefly summarize properties of the Colin de Verdi\`re invariant $\mu$ and provide the list of MMIK graphs $G$ through size 24 with $\mu(G) \geq 6$.
As mentioned in the introduction, a MMIK graph $G$ has $\mu(G) \geq 5$. All the MMIK graphs $G$ in our list are $3$-apex and it is straightforward to argue (see \cite[Lemma 5.4]{MNPP}), this implies $\mu(G) \leq 6$. Making use of van der Holst's Conjecture~\ref{con:vdH} (stated in the Introduction), the MMIK graphs with $\mu \geq 6$ are precisely those that have 
a minor in the $K_7$ or $K_{3,3,1,1}$ families. All 58 graphs in the $K_{3,3,1,1}$ family are MMIK as are 14 of the 20 graphs in the $K_7$ family~\cite{GMN}.
Therefore, any further examples of MMIK graphs with $\mu \geq 6$ have minors among the six nIK graphs in the $K_7$ family. This includes the 34
MMIK graphs in the  $E_9+e$ and $G_S$ families. The classification of MMIK graphs of size at most 22 is summarized in \cite{FMMNN} and we conclude
that all such graphs have $\mu=6$. There are three MMIK graphs of order 23~\cite{KM}, and they do not have a minor in the $K_7$ family. By van der Holst's conjecture these three have $\mu = 5$.

For the remainder of this section, we assume van der Holst's conjecture, Conjecture~\ref{con:vdH}. A MMIK graph $G$ with $\mu(G) \geq 6$ and size $\|G\| \geq 24$ has one of the six nIK graphs in the $K_7$ family as a minor. That is,  $G$ is an {\em expansion} of a nIK $K_7$ family graph.
As discussed in~\cite{KM}, the nIK graphs of order 22 formed by adding an edge to a nIK $K_7$ family graph are the 29 nIK graphs in the $H_8+e$ family and the remaining nIK expansions of order 22 have a vertex of degree at most two. In total, we found that there are 62 nIK expansions of order 22 and they 
are listed in Appendix B. A computer search of the order 23 expansions of these 62 graphs uncovered 955 nIK expansions of the six nIK graphs in the $K_7$ family,
which are also listed in Appendix B.
Thus, a MMIK graph $G$ of order 24 and $\mu(G) \geq 6$ is an expansion of one of those 955. 
A computer search indicates that there is exactly one 
size 24 MMIK graph that is an expansion of one of the 955. Here is an edge list of that graph, $G_{13,24}$.

$$G_{13,24} :
{[}(0, 2), (0, 3), (0, 4), (0, 5), (1, 5), (1, 9), (1, 10), (1, 12), (2, 5),$$
$$ (2, 6), (2, 7), (3, 9), (3, 11), (3, 12), (4, 6), (4, 8), (4, 10), $$
$$(5, 8), (6, 8), (6, 9), (7, 10), (7, 11), (7, 12), (8, 11)]$$

Contract the edges $(1,12)$, $(4,10)$, and $(6,9)$. Use, in each case, the smaller endpoint as the label of the vertex formed by contraction. 
This means the resulting graph on 10 vertices has no vertices labelled 9 or 10.
It is a nIK graph in the
$K_7$ family with the following edges. 

$$N_{10}: {[}(0, 2), (0, 3), (0, 4), (0, 5), (1, 3), (1, 4), (1, 5), (1, 6), $$
$$(1, 7), (2, 5), (2, 6), (2, 7), (3, 6), (3, 11), (4, 6),$$
$$ (4, 7), (4, 8), (5, 8), (6, 8), (7, 11), (8, 11){]}$$

Since $G_{13,24}$ has a minor in the $K_7$ family and $\mu$ is monotone under minors~\cite{CdV} $\mu(G_{13,24}) \geq 6$. On the other hand,
$G_{13,24}$ is $3$-apex, so $\mu(G_{13,24}) \leq 6$ (see Lemma 5.4 of \cite{MNPP}). Thus, $\mu(G_{13,24}) = 6$ and, using Conjecture~\ref{con:vdH},
appears to be the only graph of size 24 with $\mu = 6$. It is also the only $\mu = 6$ MMIK graph with size greater than 22 that we know of.

\section{Claws in MMIK graphs}
\label{sec:claws}




In this section we discuss the structure of a MMIK graph near a vertex of degree three or four as well as the connectivity of these graphs.
We begin by observing that the neighborhood of a degree three vertex induces a $K_{3,1}$ or claw subgraph.
The following result refers to properties, such as IK and non-planar,
that are preserved by the $\ty$ move and by contracting an edge on a degree two vertex.

\begin{lemma}
\label{lem-mmikclaw}
Let $\PP$ be any graph property that is preserved by the $\ty$ move
and by contracting an edge on a degree two vertex.
Then in every graph that is minor minimal with respect to property P,
every vertex of degree 3 has a claw neighborhood.
\end{lemma}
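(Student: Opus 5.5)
The plan is a direct minimality argument: if a degree $3$ vertex fails to have a claw neighborhood, I will produce a proper minor of $G$ that still has $\PP$. Here a claw neighborhood of $v$ means its three neighbors are pairwise nonadjacent, so that the subgraph induced on $v$ and its neighbors is $K_{1,3}$. So let $G$ be minor minimal with respect to $\PP$ and let $v$ have degree $3$ with neighbors $a,b,c$. Suppose, for a contradiction, that two neighbors are adjacent, say $ab \in E(G)$. Then $v,a,b$ span a triangle.

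The key step is to apply a $\ty$ move to the triangle $vab$. This deletes the edges $va$, $vb$, $ab$ and adds a new vertex $w$ adjacent to $v$, $a$, $b$. Call the resulting graph $G'$. Since $\PP$ is preserved by $\ty$ moves, $G'$ has $\PP$.

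In $G'$ the vertex $v$ has degree two, with neighbors $w$ and $c$. Contract the edge $vw$ to get a graph $G''$, and keep the label $w$ for the merged vertex. By hypothesis $G''$ still has $\PP$. No multiple edges arise, since $w$ was not adjacent to $c$ in $G'$. In $G''$ the vertex $w$ is adjacent exactly to $a$, $b$, $c$, and every other edge of $G$ is unchanged except that $ab$ has disappeared. Hence $G''$ is isomorphic to $G - ab$, with $w$ playing the role of $v$.

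Thus $G-ab$, a proper minor of $G$, has property $\PP$, contradicting minor minimality. So the neighbors of $v$ are pairwise nonadjacent and $v$ has a claw neighborhood. The only point needing care is the bookkeeping in the previous paragraph, namely checking that the composite of the $\ty$ move and the contraction is exactly the deletion of $ab$ and creates no parallel edges. I expect no real obstacle beyond that.
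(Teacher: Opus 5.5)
Your proof is correct and is essentially the paper's argument: a $\ty$ move on the triangle $vab$, followed by contracting the edge from $v$ to the new center, yields a graph isomorphic to $G-ab$, a proper minor with property $\PP$. Your explicit check that no parallel edges arise is a detail the paper leaves implicit.
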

\begin{proof}
Suppose $G$ has property $\PP$,
$x$ has degree 3 in $G$,
its neighbors are $a, b, c$,
and $ab$ is an edge of $G$.
Do a $\ty$ move on $xab$.
Let $y$ denote the center of the new Y.
Now contract $xy$.
The graph we get is isomorphic to $G - ab$
and has property $\PP$.
So $G$ is not minor minimal with respect to P.
\end{proof}

\begin{corollary}
\label{cor-mmikclaw}
Vertices of degree 3 in MMIK graphs
are not in 3-cycles.
\end{corollary}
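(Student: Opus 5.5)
The plan is to apply Lemma~\ref{lem-mmikclaw} with $\PP$ taken to be the IK property. This requires checking the two hypotheses of the lemma for IK.

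The first hypothesis is that IK is preserved by the $\ty$ move. This is exactly the content of Lemma~\ref{lem:ty}: a child of an IK graph is IK, following Sachs' argument.

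The second hypothesis is that IK is preserved by contracting an edge at a degree two vertex. Suppose $v$ has degree two with neighbors $u$ and $w$, and let $G'$ be obtained by contracting $uv$. We argue the contrapositive: if $G'$ has a knotless embedding, so does $G$. Note that $G'$ is $G$ with the path $u v w$ replaced by the single edge $uw$. There is one subtle case, where $uw$ was already an edge of $G$; then $G'$ carries a multi-edge, or we simplify to a minor of it. In either case, take a knotless embedding of $G'$ and subdivide the image of the edge $uw$ by placing $v$ on its interior. Every cycle of $G$ then maps to a cycle of $G'$ with the same image as a subset of $\R^3$, so it is unknotted. If the multi-edge was discarded, we first reinsert a parallel copy of $uw$ running close alongside the original edge; this creates no knotted cycles, since any new cycle is isotopic to one using the original edge. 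Hence $G$ is nIK whenever $G'$ is, and IK is preserved by the contraction.

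With both hypotheses verified, Lemma~\ref{lem-mmikclaw} says that in an MMIK graph every degree $3$ vertex $x$ has a claw neighborhood: no two of its neighbors $a,b,c$ are adjacent. A $3$-cycle through $x$ would have the form $xab$ with $ab$ an edge, which this forbids.

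The only point needing care is the second hypothesis, specifically the case where $u$ and $w$ are already adjacent. The paper's statement of the lemma presupposes that IK qualifies, and the argument above handles that case.
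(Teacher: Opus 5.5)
Your proposal is correct and follows the paper's route: the corollary is Lemma~\ref{lem-mmikclaw} applied with $\PP$ equal to IK, with Lemma~\ref{lem:ty} supplying invariance under the $\ty$ move. Your check that contracting an edge at a degree two vertex preserves IK is sound; the paper only asserts this. The multi-edge case you handle never arises in the lemma's proof, because the new vertex $y$ is not adjacent to $c$.
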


Equivalently, since MMIK graphs have minimum degree three, every vertex in a 3-cycle has degree at least four.

\begin{proposition}
\label{prop-mmik3cut}
If $G$ is a MMIK graph and $\{a, b, c\}$ is a 3-cut in $G$,
then $G$ has a vertex $x$ such that
the subgraph of $G$ induced by $\{x,a,b,c\}$ consists of
the edges $xa, xb, xc$ only.
\end{proposition}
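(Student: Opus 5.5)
The plan is to show first that one side of the cut is a single vertex, and then to use Lemma~\ref{lem-mmikclaw} to rule out edges among $a,b,c$.

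\textbf{Setup.} Let $C_1,\dots,C_k$ ($k\ge 2$) be the components of $G-\{a,b,c\}$, and set $H_i=G[C_i\cup\{a,b,c\}]$. I will use the fact, recorded in this section, that a MMIK graph is $3$-connected. Then each of $a,b,c$ has a neighbor in every $C_i$; otherwise a $2$-cut would separate $C_i$.

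\textbf{Step 1: reducing to a single-vertex side.} Suppose, for contradiction, that every $C_i$ has at least two vertices. Fix one component $C=C_1$ and let $R=C_2\cup\dots\cup C_k$ be the rest. Let $G_1$ be $H_1$ together with one new vertex $y$ joined to $a,b,c$. Define $G_2$ symmetrically: the graph on $R\cup\{a,b,c\}$ together with a vertex $x$ joined to $a,b,c$.

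Since $C_1$ is connected and meets the neighborhoods of $a$, $b$ and $c$, contracting $C_1$ to a point, and deleting any edges among $a,b,c$ as needed, exhibits $G_2$ as a minor of $G$. Because $|C_1|\ge 2$, at least one contraction is performed, so $G_2$ is a proper minor. The same argument applies to $G_1$. Hence both $G_1$ and $G_2$ are nIK, since $G$ is MMIK.

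\textbf{Step 2: gluing knotless embeddings.} Take knotless embeddings of $G_1$ and $G_2$, and choose a small ball $B_1$ around $y$ and a small ball $B_2$ around $x$. Each ball meets its graph in three unknotted radial arcs, ending at three boundary points. Remove the open balls and glue the two complements along their boundary spheres so that the three arc endpoints match up, with the arc toward $a$ meeting the arc toward $a$, and similarly for $b$ and $c$. The result is an embedding of $G$ in $S^3$, with any edges among $a,b,c$ kept on one side.

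Now take a cycle $Z$ of $G$. If $Z$ lies on one side, it is isotopic to a cycle of $G_1$ or of $G_2$, and so is unknotted. Otherwise $Z$ crosses the gluing sphere exactly twice, entering at one of the three points and leaving at another. It is then the connected sum of a cycle of $G_1$ through $y$ and a cycle of $G_2$ through $x$. Both of these are unknots, so $Z$ is unknotted. This makes the embedding of $G$ knotless, contradicting that $G$ is IK.

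The main obstacle is exactly this step. I must check carefully that the decomposition along the separating sphere, which meets $Z$ in two points, is a genuine connected sum, and that the pieces are isotopic to the cycles in $G_1$ and $G_2$. Both claims follow because inside $B_1$ and $B_2$ the graph consists only of straight radial arcs.

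\textbf{Step 3: the claw.} By Step 1, some component is a single vertex $x$. By $3$-connectivity, $x$ is adjacent to $a$, $b$ and $c$, and since its only possible neighbors are $a,b,c$, it has degree exactly $3$. The IK property is preserved by $\ty$ moves (Lemma~\ref{lem:ty}) and by contracting an edge at a degree-two vertex. Therefore Lemma~\ref{lem-mmikclaw} applies, and $x$ has a claw neighborhood, so none of $ab$, $bc$, $ca$ is an edge. Hence $G[\{x,a,b,c\}]$ consists of exactly the edges $xa$, $xb$, $xc$, as claimed.
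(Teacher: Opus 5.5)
Your proposal is correct and takes essentially the paper's route: contract one side of the cut to a degree-three vertex, glue knotless embeddings of the two resulting proper minors across that vertex so that any cycle crossing the cut is a connected sum of two unknots, and settle the single-vertex case with Lemma~\ref{lem-mmikclaw}. Two small fixes to your bookkeeping: first, your ball-removal gluing actually embeds $G$ with each of $a,b,c$ split into two vertices joined by an arc, so contract those three arcs (this changes no knot types); second, when $R$ is disconnected, $G_1$ comes from contracting $C_2$ and deleting the other components, not from contracting $R$. Your split of $C_1$ versus the rest is otherwise more careful than the paper's write-up, which does not address extra components or edges among $a,b,c$ that would be duplicated.
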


\begin{proof}
Let $G_1, G_2$ be distinct connected components of $G -a-b-c$.
If, for some $i \in \{1,2\}$, the graph $G_i$ has only one vertex,
then we are done  by Lemma~\ref{lem-mmikclaw}.
So assume each $G_i$ has at least two vertices; we will show $G$ is nIK.
Let $H_i$ be the proper minor of $G$ obtained by contracting $G_i$ to one vertex, $x_i$.
Since $G$ is MMIK, $H_i$ has a knotless embedding, $\eta_i$, in $\R^3$.
Isotope $\eta_1$ so that it lies in $\R^3_+ := \{(x,y,z) \st z \ge 0\}$
and intersects the $xy$-plane in precisely $\{x_1a, x_1b, x_1c\}$.
Similarly, isotope $\eta_2$ to lie in $\R^3_- := \{(x,y,z) \st z \le 0\}$,
but with the added condition that $x_2 a, x_2 b, x_2 c$ coincide, respectively, 
with   $x_1 a, x_1 b, x_1 c$.
Let $\eta = \eta_1 \cup \eta_2$,
with $x = x_1 = x_2$.
Then $\eta - x =G$.
So, to show $G$ is nIK, it suffices to show $\eta$ is knotless.

Let $C$ be a cycle in $\eta$.
If $C \subset \eta_i$ for some $i$,
then $C$ is a trivial knot since $\eta_i$ is knotless.
So assume $C$ has 
at least one vertex above, and at least one vertex below, the $xy$-plane.

Case 1. 
$x \not \in C$.
Then we can assume 
$C= (a, v_1, \cdots, v_s, b, w_1, \cdots, w_t, a)$
for some $s \ge 1$ and $t \ge 1$, 
with $v_1, \cdots, v_s$ above the $xy$-plane
and $w_1, \cdots, w_t$ below the $xy$-plane.
The cycles 
$(a, v_1, \cdots, v_s, b, x, a)$ and
$(b, w_1, \cdots, w_t, a, x, b)$
are trivial knots since they lie in $\eta_1$ and $\eta_2$, respectively.
These two cycles intersect in the path $axb$,
so their connected sum, which is $C$, is a trivial knot.

Case 2. 
$x \in C$.
Then we can assume 
$C= (a, v_1, \cdots, v_s, b, x, c, w_1, \cdots, w_t, a)$
for some $s \ge 1$ and $t \ge 1$, 
with $v_1, \cdots, v_s$ above the $xy$-plane
and $w_1, \cdots, w_t$ below the $xy$-plane.
The cycles 
$(a, v_1, \cdots, v_s, b, x, a)$ and
$(c, w_1, \cdots, w_t, a, x, c)$
are trivial knots since they lie in $\eta_1$ and $\eta_2$, respectively.
These two cycles intersect in the edge $ax$,
so their connected sum, which is $C$, is a trivial knot.
\end{proof}

In contrast,  for degree four vertices,
all possible induced subgraphs arise:
If $H$ is an order five graph that includes a vertex of degree
4, then $H$ occurs as the induced subgraph on the neighborhood
of a degree 4 vertex in some MMIK graph.

Of particular note are the graphs where the neighborhood of a degree 4 vertex is a butterfly or bowtie graph, 
formed by identifying two triangles at the degree four vertex. Performing the two $\ty$ moves
results in a grandchild with a degree two vertex. This shows that the grandchild is a graph with a degree two vertex that
has a MMIK grandparent yielding a positive response to Question~\ref{que:d3MMIK}.


Let $\kappa(G)$ denote the vertex connectivity of graph $G$.
\begin{lemma} If $G$ is MMIK, then $3 \leq \kappa(G) \leq 9$.
\end{lemma}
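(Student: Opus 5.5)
The plan is to prove the two bounds separately: the lower bound by a gluing argument modelled on Proposition~\ref{prop-mmik3cut}, and the upper bound by an edge count using Mader's theorem.

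\textbf{Lower bound.} Suppose $G$ is MMIK and has a vertex cut $S$ with $|S|\le 2$. I will show $G$ is nIK, which is a contradiction.

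First take $|S|\le 1$. Every cycle of $G$ lies in a single block. Each block is a proper subgraph, so it is nIK and has a knotless embedding. Embed the pieces in disjoint half-spaces and glue them at the cut vertex, if there is one. Every cycle then lies entirely in one knotless piece, so the embedding is knotless.

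Now take $S=\{a,b\}$, and let $G_1,G_2$ be the two sides, where $G_1$ is one component of $G-a-b$ and $G_2$ is the union of the others together with $a,b$. By minimality of the cut, each component of $G-a-b$ is adjacent to both $a$ and $b$. Let $H_1$ be obtained from $G$ by contracting $G_2-\{a,b\}$ to a single vertex $x_2$. Define $H_2$ symmetrically, contracting the $G_1$ side to a vertex $x_1$.
\begin{itemize}
\item If a side has at least two non-cut vertices, the contraction is a proper minor.
\item If a side is a single vertex, that vertex has degree at most $2$, which contradicts minimum degree three. (If the side also contains a vertex adjacent to both $a$ and $b$ with further edges, deleting the extra edges still gives a proper minor.)
\end{itemize}
So each $H_i$ is nIK and has a knotless embedding $\eta_i$.

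Exactly as in the proof of Proposition~\ref{prop-mmik3cut}, isotope $\eta_1$ into $\R^3_+$ and $\eta_2$ into $\R^3_-$ so that the paths $a x_2 b$ and $a x_1 b$ coincide in the $xy$-plane. Let $\eta$ be the union and delete the interior of that path, keeping the edge $ab$ if $G$ has it. This gives an embedding of $G$.

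Consider a cycle $C$ that meets both sides. It passes through $a$ and $b$ and splits into an arc above the plane and an arc below. Closing each arc with the common path $axb$ gives two cycles, one in $\eta_1$ and one in $\eta_2$, so both are unknots. $C$ is their connected sum along $axb$, hence trivial. Cycles that stay on one side are already trivial. Therefore $G$ is nIK, a contradiction, and so $\kappa(G)\ge 3$.

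\textbf{Upper bound.} I will use Mader's theorem: every graph of order $n$ with at least $5n-14$ edges has a $K_7$ minor. Since $K_7$ is IK~\cite{CG} and IK is minor closed upward, such a graph is IK.

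Suppose $G$ is MMIK of order $n$ with $\|G\|\ge 5n-13$. For any edge $e$, $G-e$ has at least $5n-14$ edges, so it has a $K_7$ minor and is IK. This contradicts minor minimality. Hence $\|G\|\le 5n-14$, so the average degree is at most $10-28/n<10$. Therefore the minimum degree $\delta(G)$ is at most $9$, and $\kappa(G)\le\delta(G)\le 9$.

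The main obstacle is the 2-cut gluing step. The points needing care are checking that the contracted graphs really are proper minors, and confirming that connected sum along a shared path preserves unknottedness. The upper bound reduces to citing Mader's extremal result.
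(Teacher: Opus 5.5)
Your proof is correct and essentially the paper's: the lower bound glues knotless embeddings of two proper minors across the 2-cut so that every cycle is either an unknot or a connected sum of unknots, and the upper bound uses Mader's $5n-14$ edge bound to force $\delta(G)\le 9$. The one real difference is that you glue along the path $axb$, obtained by contracting a side to a vertex as in Proposition~\ref{prop-mmik3cut}, which needs minimum degree three to make that minor proper; the paper instead contracts the other side onto the edge $ab$. Three small points to tidy: Mader's theorem needs $n\ge 7$ (automatic for IK graphs); when $G-a-b$ has several components you should contract one of them and delete the rest, since you cannot contract a disconnected set; and if $ab\in E(G)$ you should keep only one of its two copies.
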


\begin{proof} 
We begin with the lower bound.
If $G$ is not connected or 1-connected, then $G$ is not MMIK. Suppose $G$ is 2-connected with cut set ${a,b}$. Let $H_1$ and
$H_2$ be subgraphs of $G$ with $V(H_1) \cap V(H_2) = \{a,b\}$ and $G = H_1 \cup H_2$.
If $ab$ is an edge of $G$,
place an unknotted embedding of $H_1$ above the $xy$-plane and $H_2$ below the $xy$-plane with
the edge $ab$ in the plane. Then a cycle in $G$ is either in an $H_i$ and, therefore unknotted, or else
is the connected sum along $ab$ of two unknotted cycles in $H_1$ and $H_2$. This is a knotless embedding
of $G$.

If $ab$ is not an edge, we can contract the half of the graph to produce that edge:
That is, by contracting $H_{i+1}$ to the edge $ab$, we see that $H_i + ab$ is nIK.
As in the previous case, take knotless embedding of each $H_i + ab$ ($i = 1,2$) and join them along the edge $ab$.
Again, each cycle in $G$ is either in an $H_i+ab$ or the connected sum of two unknots.
This shows that $\kappa(G) \geq 3$.

For the upper bound, 
Mader~\cite{Md} showed that if $|G| \geq 7$ and $\|G \| \geq 5|G|-14$, then $G$ has a $K_7$ minor
and is, therefore, IK. It follows that if $\delta(G) \geq 10$, then $G$ is IK but not MMIK. The upper bound follows as
 $\kappa(G) \leq \delta(G)$.
 \end{proof}
 
\begin{table}[ht]
\begin{center}
\begin{tabular}{|l|c|c|c|c|} \hline
$\kappa(G)$ & 3 & 4 & 5 & 6 \\ \hline
Count & 2739 & 257 & 30 & 2 \\ \hline
\end{tabular}
\vspace*{.1in}
\caption{Connectivity distribution.} \label{tbl:conn}
\end{center}
\end{table}

Table~\ref{tbl:conn} shows the distribution of our MMIK examples by connectivity. 
The largest connectivity observed is realized by $K_7$ and
 $G_{9,28}$ (see \cite{GMN}), both with $\kappa(G) = 6$. We can describe $G_{9,28}$ as the complement
 of $C_7 \sqcup K_2$, the disjoint union of a seven cycle and $K_2$. This is reminiscent 
 of the situation for apex obstructions where the largest connectivity is $\kappa(G) = 5$, 
 and, to date, the only known obstructions that realize this maximum are $K_6$ and 
 the J\/orgensen graph, which is the complement of $C_6 \sqcup K_2$, see~\cite{PPP}. Note that graphs
 with complement $C_n \sqcup K_2$ are identified as special examples realizing
 $\mu(G) < n-1$, see \cite[Theorem 5.5 and Figure 4]{HLS}.
 
We conclude this section with an observation about nIK graphs that have a $\ty$ move to an IK graph.

\begin{lemma} Let $G$ be nIK with a $K_3$ subgraph.
The graph $G$ admits no knotless embedding with the $K_3$ panelled if and only if
a $\ty$ move on the $K_3$ gives an IK graph.
\end{lemma}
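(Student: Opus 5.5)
We prove the equivalent statement: $G$ has a knotless embedding in which the $K_3$ (say on vertices $a,b,c$) is panelled if and only if the graph $G'$ obtained by a $\ty$ move on $abc$ is nIK. Here a cycle is \emph{panelled} if it bounds a disk whose interior is disjoint from the embedded graph. The plan is to give two explicit constructions, one converting embeddings of $G$ into embeddings of $G'$ and one going back. We then check in each direction that every cycle of the new embedding is isotopic to (or equal to) a cycle of the old one.

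For the forward direction, suppose $\eta$ is a knotless embedding of $G$ with the triangle $abc$ bounding a panel $D$. Place the new vertex $y$ in the interior of $D$. Join $y$ to $a,b,c$ by three arcs in $D$, and delete the edges $ab,bc,ca$. Call the resulting embedding $\eta'$ of $G'$. Each cycle $C'$ of $\eta'$ either avoids $y$, in which case it is a cycle of $\eta$, or passes through exactly two of the new edges, say $ay$ and $yb$. In the second case, push the path $ayb$ across the subdisk of $D$ cut off by $ay\cup yb\cup ab$ onto the edge $ab$. This subdisk has interior disjoint from the rest of the graph because $D$ is a panel. The push is an isotopy carrying $C'$ to the cycle $C'-y+ab$ of $\eta$, which is unknotted. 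Hence $\eta'$ is knotless and $G'$ is nIK.

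For the converse, suppose $\eta'$ is a knotless embedding of $G'$. The edges $ya,yb,yc$ form an embedded tree; take a small regular neighbourhood of it, a ball $B$ meeting the rest of the graph only in three arcs near $a,b,c$. Inside $B$, the tree spans a disk: take the boundary of a thin neighbourhood of the Y inside a disk containing it. Now delete $y$, and add edges $ab,bc,ca$ running close to the paths $ayb$, $byc$, $cya$ so that together they bound a small disk $D\subset B$ containing the former Y. This disk is a panel for $abc$, since nothing else of the graph enters $B$. Call the resulting embedding $\eta$ of $G$. A cycle of $\eta$ that uses exactly one of the new edges, say $ab$, is isotopic within $B$ to the cycle of $\eta'$ obtained by replacing $ab$ with $ayb$, and so is unknotted. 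A cycle that avoids the new edges already lies in $\eta'$. A cycle using two or three triangle edges is either the triangle itself, which bounds $D$ and is unknotted, or uses a path such as $a\,b\,c$. That path can be slid within $D$ to the path $a\,y\,c$, so this cycle too is isotopic to a cycle of $\eta'$. Hence $\eta$ is knotless with $abc$ panelled.

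Combining the two directions gives the lemma: $G$ has no knotless embedding with the $K_3$ panelled exactly when $G'$ is IK. The main point to handle carefully is the converse construction, in particular the claim that the disk in $B$ meets the graph only in its boundary. This holds because $B$ is a regular neighbourhood of the Y, so every other part of the graph meets $B$ only in the three terminal arcs at $a$, $b$, $c$. The hypothesis that $G$ is nIK is not needed for this equivalence; it only ensures the statement is non-vacuous.
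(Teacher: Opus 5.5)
Your proposal is correct and follows the same route as the paper: you perform the $\ty$ move inside the panel for one direction, and for the other you build the triangle and its panel in a small neighbourhood of the Y, with your cycle-by-cycle isotopy checks filling in what the paper leaves implicit. One small correction to the step you flagged: a regular neighbourhood $B$ of the Y that contains $a,b,c$ does meet the rest of the graph, namely in initial segments of all the other edges at $a$, $b$, $c$, so ``nothing else of the graph enters $B$'' is not literally true; the fix is to choose $D$ near each of these vertices as a thin wedge around the Y-edge that avoids those finitely many edge germs, which is always possible.
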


\begin{proof}
Let $H$ denote the graph that results from a $\ty$ move on the $K_3$.
If there were such a knotless embedding, we could realize the $\ty$ move in the panel to get a knotless
embedding of $H$. Conversely, if $H$ is nIK, then, in a knotless embedding of
$H$, we can make a panel for the $K_3$ inside a neighborhood of the $Y$ in $H$ to obtain a knotless
embedding of $G$ with the $K_3$ paneled.
\end{proof}

\section{Two large families}

In \cite{GMN} the authors began an exploration of MMIK graphs in 
$\ty$ families, including the $G_{13,25}$ family. At the time
we saw that were more then 0.6 million graphs in the family 
without reaching a precise upper bound. In the meantime we have 
determined the exact number, which is more than 1.4 million. 
On the other hand, we have uncovered an even larger family.

The graph $G_{10,30}$ was introduced in \cite{MNPP}. To date we have not been able to bound the number of
graphs in the $G_{10,30}$ family, but know it is at least eight million.
In his thesis, Herron~\cite{H} proved this graph is minor minimal for IK and we
summarize his argument here. The strategy is to show that every 
graph $H$ obtained by deleting or contracting an edge is nIK.
All edge-contraction minors are 2-apex. All but seven of the edge-deletion subgraphs are 2-apex. Six of the remaining seven have a 2-apex descendant and are nIK by Lemma~\ref{lem:ty}.
Herron's thesis includes a figure realizing a knotless embedding of
the final edge-deletion subgraph, which concludes the argument.

The highly-symmetric graph $G_{11,37}$ has an interesting structure. (See Section~\ref{sec:TYfam} for an edge list of this graph.) It is obtained by adding a vertex of degree eight to a maximal linklessly embedable graph of order ten. As such, its $\mu$-invariant is at most five. Given that $G_{11,37}$ is IK, it follows that its $\mu$-invariant is exactly five. The family of $G_{11,37}$ includes (at least) eight MMIK graphs of size 37, the largest size that occurs in our list of three thousand. 
The 100,000+ IK graphs that we know of in this family constitute a complete list of its descendants. 
We noticed that the descendants of
$G_{11,37}$ form a tree, which appears to be 
quite exceptional. 
For example, \cite{GMN} includes (parts of) graphs for three $\ty$ families
all of which include an abundance of cycles.

A preliminary investigation shows that among the simple minors (ie by contracting or deleting a single edge) of 
$G_{11,37}$, all but eight have 2-apex descendants and are nIK for this reason. We leave it as an open question to find knotless embeddings for the remaining eight graphs.

\section*{Acknowledgements}
We owe a large debt to Ramin Naimi and Elena Pavelescu for many detailed discussions about ideas in this paper, especially the contents of Section 4. Madeline Potter wrote code and performed searches that were 
crucial in forming our list of three thousand MMIK graphs.

}\end{document}